\documentclass[arxiv,reqno,twoside,a4paper,12pt]{amsart}
\usepackage{mltools}
\usepackage{mlmath62,mlthm10-REAR}

\usepackage{upref,amsmath,amssymb,amsthm,mathrsfs}
\usepackage{tikz, graphics, xypic, latexsym, tensor}
\usepackage{graphicx}
\usepackage[colorlinks=true,linkcolor=blue,citecolor=blue]{hyperref}



\linespread{1.05}
\usepackage[scaled]{helvet} 
\usepackage{courier} 
\usepackage[mathbf]{euler}
\usepackage{mllocal}


\setcounter{tocdepth}{1}
\numberwithin{equation}{section}



\begin{document}

\date{\today}

\title[Cheeger-M\"uller theorem for a wedges along a submanifold]
{Cheeger-M\"uller theorem for a wedge singularity along an embedded submanifold}

\author{Luiz Hartmann}
\address{Universidade Federal de S\~ao Carlos (UFSCar),
	Brazil}
\email{hartmann@dm.ufscar.br, luizhartmann@ufscar.br}
\urladdr{http://www.dm.ufscar.br/profs/hartmann}

\author{Boris Vertman}
\address{Universit\"at Oldenburg, Germany}
\email{boris.vertman@uni-oldenburg.de}
\urladdr{https://uol.de/boris-vertman/}

\thanks{Partially support by FAPESP: 2021/09534-4 and CAPES/DAAD:
8881.700909/2022-01}

\subjclass[2020]{Primary 58J52; Secondary 57Q10}
\keywords{wedge singularity, analytic torsion, Reidemeister
torsion}

\begin{abstract}
In this paper we equate the analytic and the intersection Reidemeister torsions
on spaces with a specific type of wedge singularities, which arise by turning the
disc cross-sections in the tubular neighborhood of an embedded submanifold
of even co-dimension into cones. Our result is related to a similar equation, in a setting disjoint from ours,
which was previously obtained by Albin, Rochon and Sher \cite{ARS}.

\end{abstract}

\maketitle
\tableofcontents

\section{Introduction and statement of the main result}

\subsection{Spaces with wedge singularities}
Such spaces are commonly known as spaces with non-isolated conical sin\-gu\-la\-ri\-ti\-es.
They are the fundamental examples of smoothly Thom-Mather stratified spaces with iterated cone-edge metrics.
We refer the reader to Albin, Leichtnam, Mazzeo and Piazza \cite[Definition 1 and 5]{ALMP1} for a careful
definition of those. Spaces with wedge singularities are stratified spaces of depth $1$ and we
recall their definition here explicitly.

\begin{definition}\label{wedge-def}
Let $\widetilde{M}$ be an oriented compact manifold with boundary $\partial M$, which is the total
space of a fibration $\phi: \partial M \to B$ with base $B$ and typical fibre $F$ compact
smooth manifolds. Write $M$ for the open interior of $\widetilde{M}$.
Consider a collar $\widetilde{\cU} \cong [0,1) \times \partial M$
of the boundary, write $x\in [0,1)$ for its radial coordinate and set
$$
\cU := \widetilde{\cU} \cap M \cong (0,1) \times \partial M.
$$
We say that $(M,g)$ is a space with a wedge singularity, if the Riemannian
metric $g$ in the open interior $M$ has the form $g = g_0 + h$, with the rigid part
$$
g_0\restriction \cU = dx^2 + x^2 \kappa + \phi^* g_B,
$$
where $g_B$ is a Riemannian metric on $B$, $\kappa$ is a symmetric positive definite
bilinear form on $T\partial M \cong \ker d\phi \oplus \phi^*TB$, restricting to a Riemannian
metric on fibres $\ker d\phi \cong TF$, and vanishing on the second component.
The higher order term $h$ satisfies $|h|_{g_0} = O(x)$
as $x \to 0$. See illustration of $(\cU, g)$ in Figure \ref{figure-wedge}.
\end{definition}

\begin{figure}[ht]
  \includegraphics[width=0.7\linewidth]{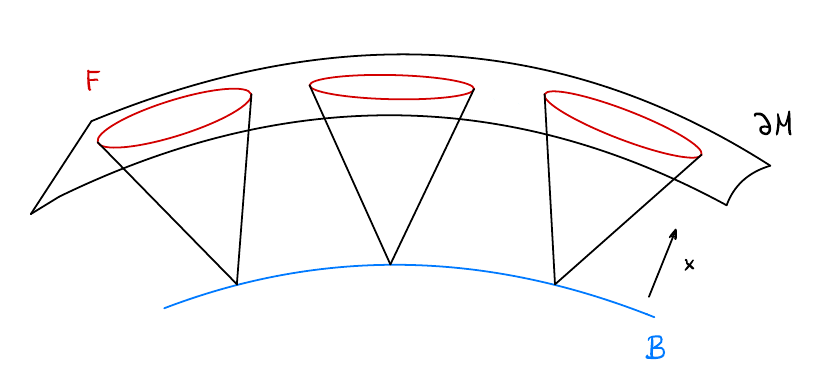}
  \caption{Illustration of the singular neighborhood $(\cU,g)$.}
\label{figure-wedge}
\end{figure}

\noindent \textbf{Notation:} We will use $\widetilde{M}, \overline{M}, M$ and $\widetilde{\cU}, \overline{\cU}, \cU$ as follows:
\begin{itemize}
\item $\widetilde{M}$ is compact with boundary $\partial M$ and collar $\widetilde{\cU} \cong [0,1) \times \partial M$
is a fibration of cylinders $[0,1) \times F$ over $B$.
\item $M$ is the open interior of $\widetilde{M}$ and $\cU = \widetilde{\cU} \cap M \cong (0,1) \times \partial M$
is a fibration of open cylinders $(0,1) \times F$ over $B$.
\item $\overline{M}$ is the metric space completion of $(M,d_g)$, where the distance metric $d_g$ is defined with respect to $g$.
Then $\overline{\cU}$ is a fibration of cones $C(F) = [0,1) \times F/ (\{0\} \times F)$ over $B$.
\end{itemize}

In the setup of  smoothly Thom-Mather stratified spaces
as in \cite{ALMP1}, $\widetilde{M}$ is the resolution of a compact stratified space $\overline{M}$.
Within the scope of this note, we will impose the following assumption, which
has been imposed before by Mazzeo and the second named author \cite{MaVe1} in their analysis of analytic torsion on
spaces with a wedge singularities.

\begin{assumption}\label{ass-high}
Consider local edge coordinates $(x,y,z)$ on $\cU$, where $y = (y_i)$
is the lift of a local coordinate system on $B$ and $z=(z_j)$ restrict
to local coordinates on each fibre $F$. We assume that the higher order term $h$ is "even", \ie
has an expansion in even powers of $x$ as $x\to 0$, when acting on the local vector fields $\{\partial_x, \partial_y, \partial_z\}$,
except for the cross-terms $h(\partial_x, \partial_y)$ and $h(\partial_x, \partial_z)$, which are assumed to have
an expansion in odd powers of $x$, as $x\to 0$.
\end{assumption}

This assumption is well-defined only within a particular \emph{even} equivalence class of coordinate charts in $\cU$.
A local coordinate system $(x', y', z')$ in $\cU$ is said to be in the even equivalence class of a coordinate chart $(x,y,z)$
if the asymptotics of $x'/x$, $y'$ and $z'$ near the wedge admits only powers of $x^2$, with coefficients in the
expansions depending smoothly on $y$ and $z$. We name coordinates within a fixed even equivalence class \emph{special}
coordinates and will stay within that class henceforth.

\subsection{Spaces with wedges along a submanifold}
In this note we are actually interested in a very specific type of wedge singularities,
when $\overline{M}$ can be given a smooth structure and the singularity is entirely in the metric.
Such spaces have been studied for instance by Atiyah and Lebrun \cite{AL}, and their advantage lies in the fact
that can be deformed into smooth compact Riemannian manifolds. We define such spaces here,
taking reference to the classical tubular neighborhood theorem as in Lee \cite[Theorem 5.24]{Lee}.

\begin{figure}[ht]
	\includegraphics[width=0.7\linewidth]{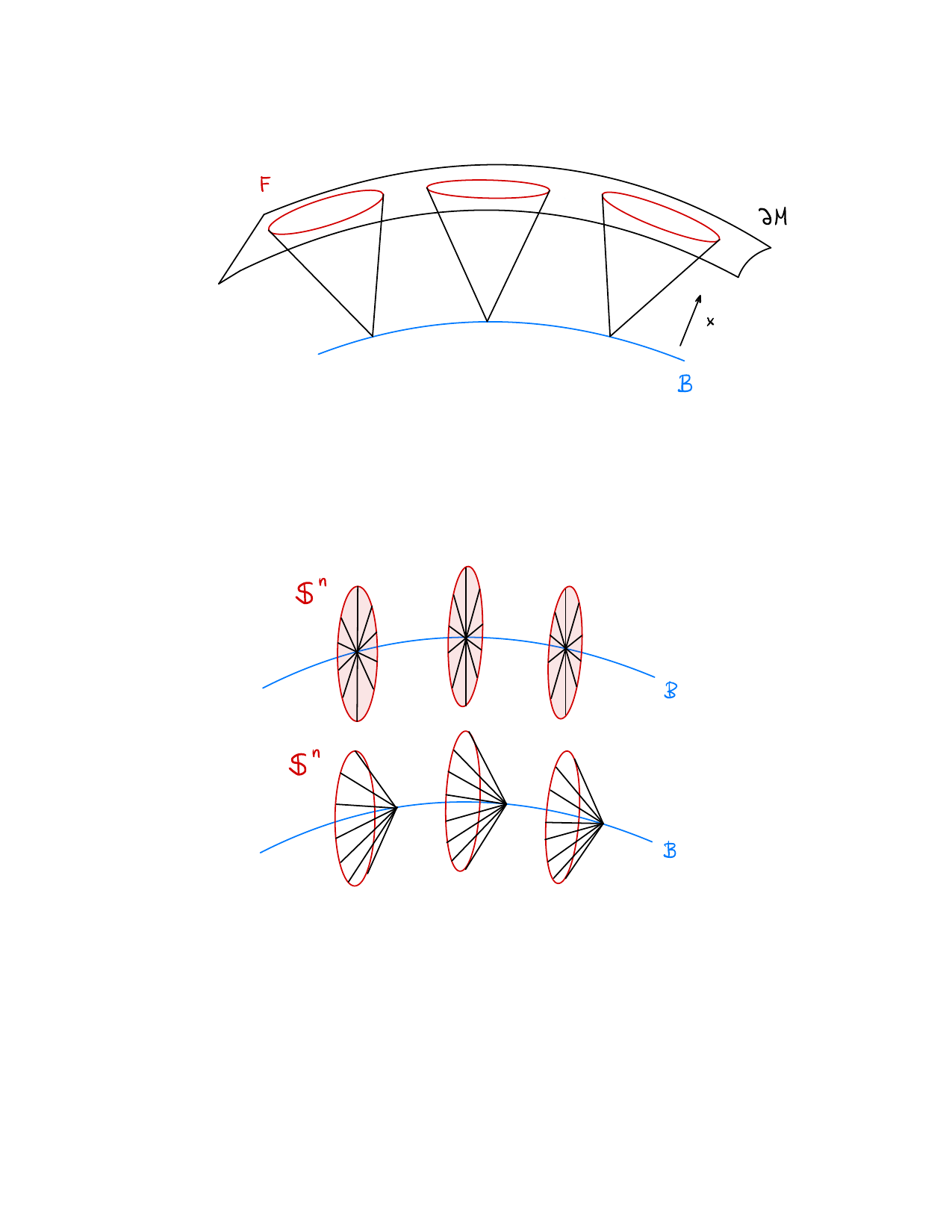}
	\caption{Illustration of the classical tubular neighborhood $(\overline{\cU},g_0)$.}
	\label{figure-embedded1}
\end{figure}

\begin{definition}\label{wedge-def2}
Consider an oriented compact Riemannian manifold $(\overline{M}^{\, m},g_0)$ with a
compact embedded submanifold $B^b \subset \overline{M}$. Consider an
open tubular neighborhood $\overline{\cU}  \subset \overline{M}$ of $B$,
the total space of a fibration $\pi: \overline{\cU} \to B$ with typical fibre $B_\delta(0) \subset \R^{m-b}$
for some $\delta > 0$. Denote by $x\in [0,1)$ the radial function of the disc. We write
$\phi:= \pi|_{\partial \overline{\cU}}$. We can always find $g_0$ such that
$$
g_0 \restriction \overline{\cU} = dx^2 + x^2 \kappa + \phi^*{g_0}|_B,$$
where $\kappa$ is a symmetric positive definite
bilinear form on $T\partial\overline{\cU} \cong \ker d\phi \oplus \phi^*TB$, restricting to
a sphere metric $g_{\mathbb{S}^n}$ on the fibres $\ker d\phi$, where $n=m-b-1$. We define a family of
wedge metrics on $M = \overline{M} \backslash B$ by specifying
$$
g_\varepsilon \restriction \overline{\cU} \backslash B = dx^2 + (1-\varepsilon^2) x^2 \kappa + \phi^*{g_0}|_B,
$$
and extending smoothly to $\overline{M} \backslash \overline{\cU}$. We call such $(M,g_\varepsilon)$
a smooth family of wedge spaces with singularity along an embedded submanifold.
\end{definition}

\begin{figure}[ht]
	\includegraphics[width=0.7\linewidth]{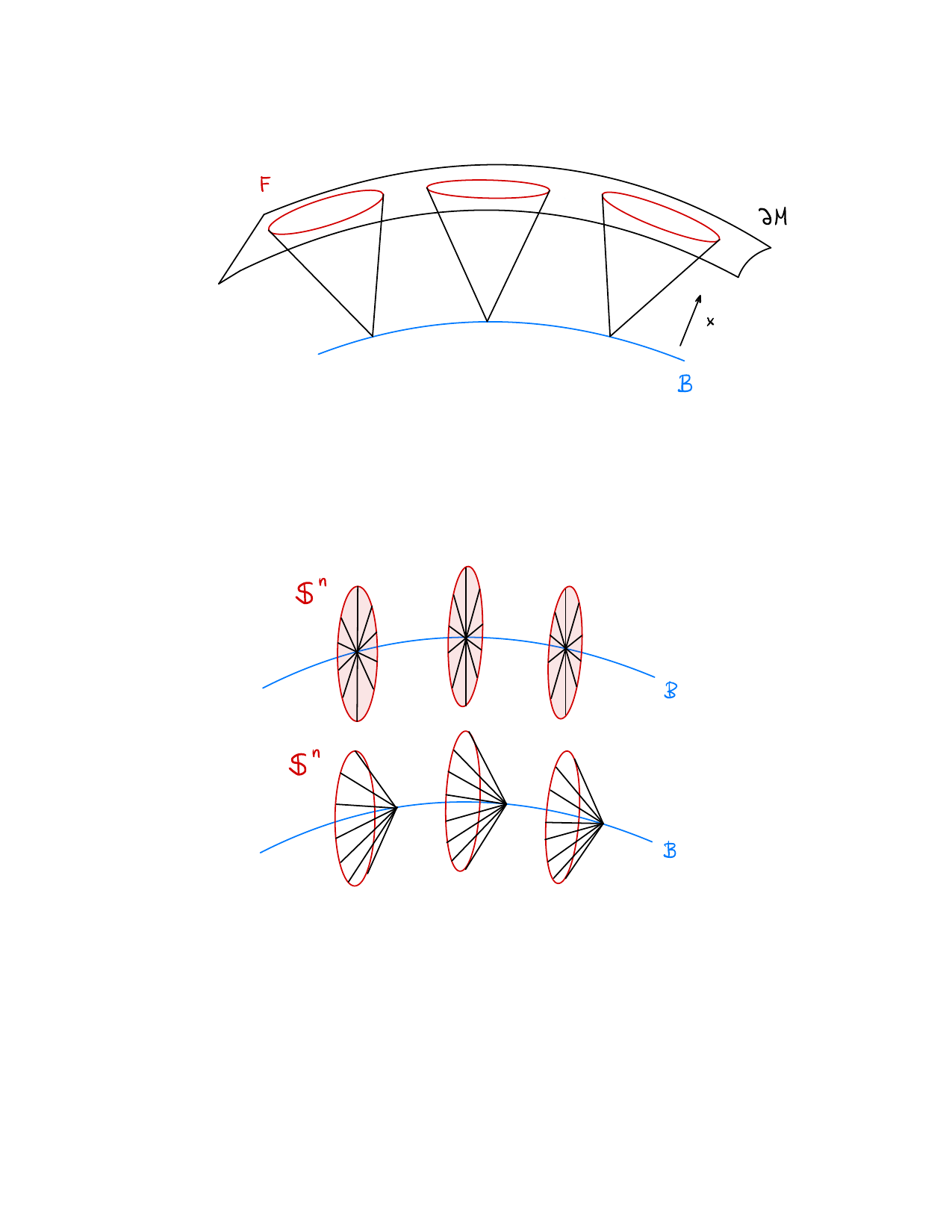}
	\caption{Illustration of the  tubular neighborhood $(\overline{\cU},g_\varepsilon)$.}
	\label{figure-embedded2}
\end{figure}

Note that each $(M,g_\varepsilon)$ is a space with a wedge singularity in the
sense of Definition \ref{wedge-def}, with $\phi$ being a fibration
of spheres $\mathbb{S}^n$.

\subsection{Statement of the main result}

The main result of this note equates the analytic  and the intersection Reidemeister torsions
on smooth family $(M,g_\varepsilon)$ of wedge spaces with singularity along an embedded submanifold.
We twist by flat vector bundles associated to an orthogonal representation of the fundamental group $\pi_1(\overline{M})$,
but omit the vector bundle from the notation. A similar equality for different flat vector bundles, a situation
disjoint from ours, as explained in Remark \ref{ARS}, is obtained in \cite{ARS} using microlocal analysis and singular degeration arguments for general wedges of
Definition \ref{wedge-def} with $\dim F$ odd. Our result is as follows.

\begin{theorem}\label{main}
Consider an oriented compact Riemannian manifold $(\overline{M},g_0)$ with a
compact embedded submanifold $B \subset \overline{M}$. Consider the
family $(M,g_\varepsilon), \varepsilon \in (0,1)$ of wedge spaces with singularity along $B$, obtained by
deforming disc cross-sections in the tubular neighborhood $\overline{\cU}$ of $B \subset \overline{M}$
into cones. Assume that $\dim M$ and $\dim B$ are both odd.

\begin{enumerate}
\item Consider the analytic torsion $T(M,g_\varepsilon)$, defined in Section \ref{Sec-2}.
\item Consider the intersection Reidemeister torsion $I^{\mathfrak{p}}\tau(\overline{M}, h_M(\varepsilon))$ of the stratified space $\overline{M}$,
where $\mathfrak{p}$ is the upper or lower middle perversity, as defined in Section \ref{R-section} .
\end{enumerate}
Then
$$
T(M,g_\varepsilon) = I^{\mathfrak{p}}\tau(\overline{M}, h_M(\varepsilon)),
$$
where $h_M(\varepsilon) \in \det \, H_*(\overline{M})$ is chosen such that $R^{-1}(h_M(\varepsilon)) \in \det \, H^*_{(2)}(M,g_\varepsilon)$
is of $L^2$-norm one. The determinant line $\det \, H_*(\overline{M})$ and the integration map $R$ will be defined below in \S \ref{R-section} and \eqref{R}.
\end{theorem}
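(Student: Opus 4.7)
The natural strategy is to treat $\varepsilon$ as a deformation parameter connecting the singular wedge regime $\varepsilon > 0$ to the smooth case $\varepsilon = 0$. At $\varepsilon = 0$ the metric $g_0$ is a genuine smooth Riemannian metric on the closed oriented manifold $\overline{M}$, written in polar coordinates around $B$, and the coefficient representation is orthogonal. The classical Cheeger-M\"uller theorem therefore provides
$$
T(\overline{M}, g_0) = \tau(\overline{M}, h_M(0)),
$$
with $h_M(0)$ given by an $L^2(g_0)$-orthonormal basis of harmonic forms. Since $m - b$ is even the fibre $\mathbb{S}^n$ is odd dimensional ($n = m-b-1$ odd), so the space is Witt and middle perversity intersection cohomology coincides canonically with de Rham cohomology, giving $\tau(\overline{M}, h_M(0)) = I^{\mathfrak{p}}\tau(\overline{M}, h_M(0))$ and hence the theorem at $\varepsilon = 0$.

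The plan is then to extend this to $\varepsilon > 0$ by showing that
$$
F(\varepsilon) := \log T(M, g_\varepsilon) - \log I^{\mathfrak{p}}\tau(\overline{M}, h_M(\varepsilon))
$$
is independent of $\varepsilon$. I would first differentiate the analytic side using the wedge heat kernel construction of \cite{MaVe1}, which provides an anomaly formula for $\partial_\varepsilon \log T(M, g_\varepsilon)$. As the variation $\dot g_\varepsilon = -2 \varepsilon\, x^2 \kappa$ is supported in $\overline{\cU}$ and vanishes on the $B$-directions, the anomaly reduces to an integral of an exact local curvature form plus a wedge contribution localised along $B$, the latter being a fibre integral over $\mathbb{S}^n$ with weight polynomial in $(1-\varepsilon^2)$. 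On the topological side, $I^{\mathfrak{p}}\tau(\overline{M}, \cdot)$ is $\varepsilon$-independent as a function on the determinant line, so the $\varepsilon$-dependence enters only through the $L^2(g_\varepsilon)$-normalisation of $h_M(\varepsilon)$ and yields the Gram-determinant term $\tfrac{1}{2}\sum_q (-1)^q q\, \partial_\varepsilon \log \det G^q(\varepsilon)$, where $G^q(\varepsilon)$ is the Gram matrix of harmonic representatives of $H^q_{(2)}(M, g_\varepsilon)$. This Gram matrix is governed by the same fibrewise rescaling by $(1-\varepsilon^2)$, so both variations come from the same local data along $B$.

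The central step is the explicit matching of the two variations. Localising near $\overline{\cU}$ and exploiting the product structure of the model wedge $(0,1)_x \times \mathbb{S}^n \times B$, the matching reduces to a closed-form computation on the cone $C(\mathbb{S}^n)$ twisted by pullbacks of flat bundles from $B$, which is accessible through Cheeger's cone torsion computation. The odd dimensionality of $\mathbb{S}^n$ ensures that the fibrewise integration produces exactly the Gram-determinant contribution, so $\partial_\varepsilon F(\varepsilon) \equiv 0$; combined with $F(0) = 0$ this yields the theorem.

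The main obstacle is to secure the required smooth dependence at the endpoint $\varepsilon = 0$. The wedge Hodge theory becomes degenerate as $\varepsilon \to 0$, since the sphere fibres shrink to a point, and one has to establish that both $\log T(M, g_\varepsilon)$ and the harmonic representatives defining $h_M(\varepsilon)$ extend smoothly across $\varepsilon = 0$. This is where the even-regularity of Assumption \ref{ass-high} and the uniform $b$-calculus construction of \cite{MaVe1} enter: together they ensure that the $\varepsilon$-dependent torsion and its anomaly are real-analytic in $\varepsilon^2$, which is what makes the deformation argument sound up to and including the smooth limit, and allows the topological identification at $\varepsilon = 0$ to propagate throughout $\varepsilon \in (0,1)$.
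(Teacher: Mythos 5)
Your overall architecture coincides with the paper's: deform in $\varepsilon$ down to the smooth metric $g_0$, apply the classical Cheeger--M\"uller theorem there, and then identify the classical with the intersection Reidemeister torsion. The deformation step is essentially Theorem \ref{main-RS} of the paper: since $\dim M - \dim B$ is even, the minimal and maximal extensions of $d$ coincide, so there is no ambiguity of ideal boundary conditions, and the metric-anomaly theorem \cite[Theorem 1.4]{MaVe1} gives directly that the Ray--Singer \emph{norm} is constant in $\varepsilon\in[0,1)$ --- which is exactly your statement that the variation of $\log T$ cancels the Gram-determinant variation of $h_M(\varepsilon)$. Two corrections here: the sphere fibres shrink as $\varepsilon\to 1$, not as $\varepsilon\to 0$ (at $\varepsilon=0$ the metric $dx^2+x^2\kappa+\phi^*g_B$ is just a smooth metric written in polar coordinates around $B$), so the endpoint you worry about is not degenerate at all; and there is no need to re-derive the anomaly by an explicit fibrewise cone computation, since the cited result already covers the whole family including $\varepsilon=0$.

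The genuine gap is in the last step. You assert that, because $\mathbb{S}^n$ is odd-dimensional and the space is Witt, middle-perversity intersection cohomology coincides with ordinary cohomology and \emph{therefore} $\tau(\overline{M},h_M(0)) = I^{\mathfrak{p}}\tau(\overline{M},h_M(0))$. The implication does not follow: Reidemeister torsion is an invariant of the based chain complex, not of its homology, and the intersection chain complex $IC^{\mathfrak{p}}_\bullet(\overline{M})$ is a proper subcomplex of $C_\bullet(\overline{M})$ with a different preferred basis (cells satisfying the perversity condition, completed by integral chains). Two complexes with isomorphic homology can perfectly well have different torsions, so the equality $\|\cdot\|^{IR}_{\overline{M},\mathfrak{p}}=\|\cdot\|^{R}_{\overline{M}}$ is a theorem that must be proved, not a consequence of the Witt condition; the paper says exactly this before stating Theorem \ref{main-IR}. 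The paper's proof proceeds by a Mayer--Vietoris gluing formula for both torsion norms, reducing the comparison to the tubular neighborhood of $B$, then to a product $W\times C(\mathbb{S}^n)$, and finally via K\"unneth to the explicit evaluation of both torsions of the cone $C(\mathbb{S}^n)$ itself, where they are shown to agree (both equal $[{\bf h}_0/{\bf n}_0]$) using \cite{HMS} and \cite[Theorem 7.7]{HS2020}. Without an argument of this kind your proof is incomplete precisely at the point where the intersection torsion enters.
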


\begin{remark}\label{ARS}
Theorem \ref{main} holds with no changes, if we twist by a flat vector bundle $E$ associated to
an orthogonal representation
$$
\rho: \pi_1(\overline{M}) \to O(r,\R).
$$
Note that
the fibres $C(F_y), F_y \cong \mathbb{S}^n$ of the fibration $\phi: \overline{\cU} \to B$ retract to the cone tip $y \in B$, and hence
the restriction of $E$ to each fibre is trivial. In particular, in our setting, we always have
$$
H^*(F_y, E) = H^*(F_y, \R^r) \neq \{0\}.
$$
On the other hand, \cite{ARS} deals with representations of $\pi_1(M) \cong \pi_1(\widetilde{M})$ and assumes in
\cite[eq.\,(8)]{ARS} that $H^*(F_y, E) =  \{0\}$. Hence the set of cases, where our theorem applies, is disjoint from the cases,
where \cite[Theorem 1]{ARS} holds.
\end{remark}

Our proof works by relating $T(M,g_\varepsilon)$ to the classical
analytic torsion of $(\overline{M},g_0)$, introduced by Ray and Singer in
\cite{RS}, and $I^{\mathfrak{p}}\tau(\overline{M},h_M(\varepsilon))$ to the classical
Reidemeister torsion $\tau(\overline{M},h_M(\varepsilon))$ of $\overline{M}$, introduced by Reidemeister
\cite{Rei} and Franz \cite{Fra}. On the combinatorial side, we provide an argument independent of the
stratification invariance result by Dar \cite[Theorem 2.9]{Dar1}. We can then apply the result of Cheeger \cite{Che1} and M\"uller \cite{Mul}.
We have the following chain of equalities (we actually prove them for torsions as norms
on determinant line bundles in cohomology)

$$
T(M,g_\varepsilon) \overset{!}{=} T(\overline{M},g_0) = \tau(\overline{M}, h_M(0)) \overset{!}{=} I^{\mathfrak{p}}\tau(\overline{M},h_M(\varepsilon)).
$$

While the middle equality is the Cheeger-M\"uller theorem, the left and right equalities form the main contribution of this note.

\subsection*{Acknowledgments} We thank the referee for very useful remarks.

\section{Analytic Ray-Singer torsion for wedges along a submanifold}\label{Sec-2}

Consider a wedge manifold $(M,g)$ with a wedge singularity at $B$ as in Definition \ref{wedge-def}.
Consider the differential forms $\Omega^*(M)$ and denote their completion with respect to the
$L^2$-scalar product by $L^2_*(M, g)$. Let $d^t_p$ denote the
formal adjoint of $d_p$, acting on $\Omega^p_0(M)$, and consider the
Hodge-Laplace operator
\begin{align*}
\Delta_p := d^t_p d_p + d_{p-1}
d^t_{p-1}: \Omega^p_0(M,E) \to \Omega^p_0(M,E).
\end{align*}
We define the maximal closed extension of $d_p$ in $L^2_p(M,g)$ with domain
\begin{equation}
\dom(d_{p, \max}) := \{\w \in L^2_p(M,g) \mid d \w \in L^2_{p-1}(M,g)\},
\end{equation}
where $d \w$ is defined distributionally.
We may also define the minimal closed extension of $d$ in $L^2_p(M,g)$ as the domain of
the graph closure of $d_p$ acting on smooth compactly supported functions
$\Omega^p_0(M)$. More precisely, the minimal domain is defined by
\begin{equation*}
\begin{split}
\dom(d_{p,\min}) := \{\w \in \dom(d_{p,\max}) \mid \exists (\w_n)_{n\in \N} \subset \Omega^p_0(M):
u_n:= \w - \w_n\\
\|u_n\|_{d_p} := \|d_p u_n\|^2_{L^2} + \|u_n\|^2_{L^2} \to 0 \ \textup{as} \ n\to \infty \}.
\end{split}
\end{equation*}
In the setting of isolated cones ($\dim B = 0$), the following was observed by
Cheeger \cite{Che2}, \cf also Br\"uning and Lesch \cite[Theorem 3.7, 3.8]{BL-1}:
for $\dim M-\dim B$ even, $\dom(d_{p,\min}) = \dom(d_{p,\max})$
for all $p$. Same holds for $\dim M-\dim B = 2\nu + 1$ odd, except for
$p = \nu$, in which case
$$
\dom(d_{\nu,\max}) / \dom(d_{\nu,\min}) \cong H^{\nu}(F).
$$
The same relations hold also for wedges by an argument, similar to the proof of
\cite[Lemma 2.2]{MaVe1}, where we identify $\dom(d_{p,\min})$ as conditions on
asymptotic expansions of elements in the maximal domain, that hold for isolated conical
singularities but also for wedges in the weak sense, \ie when paired with smooth functions on the wedge $B$.
We can now introduce relative and absolute self-adjoint extensions of $\Delta_p$
\begin{equation} \label{domains}\begin{split}
&\Delta_{p, \textup{rel}} := d^*_{p,\min} d_{p,\min} + d_{p-1, \min} d^*_{p-1, \min}, \\
&\Delta_{p, \textup{abs}} := d^*_{p,\max} d_{p,\max} + d_{p-1, \max} d^*_{p-1, \max}.
\end{split}
\end{equation}
One can also define, following \cite[(3.3a)]{BL-1}, the Friedrichs self adjoint extension of $\Delta$ by setting
$$
\Delta^{\mathscr{F}} := (d_p + d^t_{p-1})_{\max}  (d_p + d^t_{p-1})_{\min}
$$
As a consequence of \cite[Proposition 4.3]{MaVe1}
\begin{align}\label{zeta-def}
\zeta (s, \Delta^{\mathscr{F}}_p) :=
\frac{1}{\Gamma (s)} \int_0^\infty t^{s-1}
\left(\textup{Tr} e^{-t \Delta^{\mathscr{F}}_p} - \dim \ker \Delta^{\mathscr{F}}_p \right) dt, \,
\Re(s) \gg 0,
\end{align}
is well-defined and extends meromorphically
to $\C$, with $s=0$ being a regular point. We can therefore
proceed with the following

\begin{definition}\label{tors} The analytic torsion of
$(M,g)$ is denoted by $T(M,g)\in \R^+$
and defined by specifying its logarithmic value as
follows
\begin{align*}
\log T(M,g) &:= \frac{1}{2} \sum_{p=0}^m (-1)^{p} \, p \,
\left.\frac{d}{ds}\right|_{s=0} \zeta(s,\Delta^{\mathscr{F}}_{p})
\end{align*}
\end{definition}

Let $H^p_{(2)}(M,g) := \ker \Delta^{\mathscr{F}}_{p}$ denote the space
harmonic forms in $L^2_p(M,g)$. The
determinant line of $L^2$-cohomology is defined by
\begin{equation}\label{det-line}
\begin{split}
&\det \, H^*_{(2)}(M,g) := \bigotimes_{p=0}^m \det \, H^p_{(2)}(M,g)^{(-1)^{p+1}}, \\
&\textup{where} \ \det \, H^p_{(2)}(M,g):= \Lambda ^{\textup{top}} \Bigl( H^p_{(2)}(M,g) \Bigr),
\end{split}
\end{equation}
and $V^{-1}$ denotes the dual of a finite-dimensional vector space
$V$. The $L^2$-inner product of $L^2_*(M,g)$ yields a norm on
$H^*_{(2)}(M,g)$ and hence also on the determinant line $\det \, H^*_{(2)}(M,g)$, which we denote by
$\| \cdot \|_{\det \, H^*_{(2)}(M,g)}$.

\begin{definition}\label{tors-norm}
The Ray-Singer metric of $(M,g)$ is a norm on $\det \, H^*_{(2)}(M,g)$ given by
\begin{align*}
\|\cdot \|_{(M,g)}^{RS} := T(M,g) \| \cdot \|_{\det \, H^*_{(2)}(M,g)}.
\end{align*}
\end{definition}

\begin{theorem}\label{main-RS}
Consider a smooth family $(M,g_\varepsilon)$ of wedge spaces with singularity along $B$,
as in Definition \ref{wedge-def2}. Assume that $\dim M$ and $\dim B$ are both odd. Then
\begin{align}
\|\cdot \|_{(M,g_\varepsilon)}^{RS} = \|\cdot \|_{(M,g_0)}^{RS}.
\end{align}
\end{theorem}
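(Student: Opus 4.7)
The plan is to show that $\varepsilon\mapsto\|\cdot\|^{RS}_{(M,g_\varepsilon)}$ is constant on the interval $[0,1)$, which yields the theorem upon comparing with the smooth endpoint $\varepsilon=0$. Since $\dim M-\dim B=m-b$ is even under the parity hypothesis, Cheeger's observation recalled above gives $\dom(d_{p,\min})=\dom(d_{p,\max})$ for every $p$, so the Friedrichs extension is the unique closed self-adjoint extension of $\Delta_p$ and the $L^2$-cohomologies $H^*_{(2)}(M,g_\varepsilon)$ identify canonically with $H^*(\overline{M})$. This identification lets us regard $\{\|\cdot\|^{RS}_{(M,g_\varepsilon)}\}_{\varepsilon\in[0,1)}$ as a one-parameter family of norms on the fixed line $\det H^*(\overline{M})$, and it then suffices to prove that this family is independent of $\varepsilon$.

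I would first establish that $\{g_\varepsilon\}_{\varepsilon\in[0,1)}$ is a smooth family in the wedge-analytic category, so that the heat traces, zeta values, harmonic projections and $L^2$ inner products on cohomology all depend smoothly on $\varepsilon$ up to the smooth endpoint. The polyhomogeneous heat kernel construction of \cite{MaVe1} supplies the required uniform estimates, since the wedge structure is unchanged along the family and only the transverse cross-sectional metric $(1-\varepsilon^2)\kappa$ is rescaled. I would then differentiate $\log\|\cdot\|^{RS}_{(M,g_\varepsilon)}$ in $\varepsilon$ and derive a Bismut--Zhang-type anomaly formula adapted to wedge metrics. The bulk contribution is a local integral over $M$ of a Mathai--Quillen-type characteristic density paired with $\dot g_\varepsilon$, and it vanishes identically because $\dim M$ is odd, exactly as in the proof of metric-invariance of analytic torsion on closed odd-dimensional manifolds.

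The main new step, and the step I expect to be the principal obstacle, is the identification and vanishing of the singular contribution localised at the stratum $B$. In the wedge setting this contribution arises from the constant and possibly logarithmic coefficients in the short-time heat trace expansion of $\Delta^{\mathscr{F}}_{p,\varepsilon}$, and takes the form of an integral over $B$ of fibrewise heat invariants on the model cone $C(\mathbb{S}^n)$ contracted against $\dot g_\varepsilon=-2\varepsilon\,x^2\kappa$. Since $n=m-b-1$ is odd under the parity hypothesis, dimensional parity of the Seeley--DeWitt-type coefficients on the odd-dimensional sphere fibre forces this singular term to vanish, giving $\frac{d}{d\varepsilon}\log\|\cdot\|^{RS}_{(M,g_\varepsilon)}=0$ on $(0,1)$; continuity at $\varepsilon=0$ then yields the desired constancy on the whole interval $[0,1)$. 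A careful bookkeeping of the wedge heat expansion, in particular of any $\log t$ contributions, will be the technical heart of the argument; should the direct variational route become unwieldy, an alternative is to split $\overline{M}$ along $\partial\overline{\cU}$ via a BFK-type gluing formula and reduce the statement to an explicit $\varepsilon$-independence computation on the model cone bundle $\overline{\cU}$ by separation of variables in the sphere direction.
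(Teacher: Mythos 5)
Your proposal follows essentially the same route as the paper: both arguments use the parity hypothesis to get $\dom(d_{p,\min})=\dom(d_{p,\max})$ (so all self-adjoint extensions coincide) and then establish $\varepsilon$-independence of the torsion norm on $[0,1)$ by a metric-variation (anomaly) argument whose local and edge-localised contributions vanish because $\dim M$ is odd and the sphere fibre $\mathbb{S}^{m-b-1}$ is odd-dimensional. The only difference is that the paper simply cites the anomaly/invariance result \cite[Theorem 1.4]{MaVe1} (after checking Assumption \ref{ass-high} for $g_\varepsilon$), whereas you propose to rederive that theorem; the ``principal obstacle'' you identify is exactly the content of the cited result.
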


\begin{proof}
Note that under the dimensional restriction of $\dim M-\dim B$ being even,
$\dom(d_{p,\max}) = \dom(d_{p,\min})$ for all degrees $p$. In particular,
the obtain equality for the relative, absolute and the Friedrichs self-adjoint extensions
of the Hodge Laplacian $\Delta_p$ in all degrees $p$
\begin{align}
\Delta_{p, \textup{rel}} = \Delta_{p, \textup{abs}}  = \Delta^{\mathscr{F}}_{p}.
\end{align}
We can now apply \cite[Theorem 1.4]{MaVe1} to $(M,g_\varepsilon)$. The metrics $g_\varepsilon$ satisfy Assumption \ref{ass-high}.
This implies that for $\dim M-\dim B$ even, the  metrics $\|\cdot \|_{(M,g_\varepsilon)}^{RS}$
do not depend on $\varepsilon \in [0,1)$, \ie
$$
\frac{d}{d\varepsilon} \|\cdot \|_{(M,g_\varepsilon)}^{RS} = 0.
$$
This proves the theorem.
\end{proof}

\section{Intersection R-torsion for wedges along a submanifold}\label{R-section}

Consider $\overline{M}$ as a stratified  space with singular stratum $B$ and a tubular neighborhood $\overline{\cU}$
being a fibre bundle of spheres over $B$. In this section we study the intersection Reidemeister torsion of $\overline{M}$.
We begin with algebraic preliminaries, for a complete description see e.g. M\"uller \cite{Mul1}. \medskip

Let $V$ be a finite dimensional vector space. If ${\bf v}=\{v_1,\ldots,v_{\dim V}\}$ and
${\bf w}=\{w_1,\ldots,w_{\dim V}\}$ are two bases of $V$, we
denote by $({\bf w}/{\bf v})$ the matrix of base change from ${\bf v}$ to
${\bf w}$, so that for all $i=1,\ldots,\dim V$
$$
w_i = \sum\limits_{j=1}^{\dim V} ({\bf w}/{\bf v})_{ij} v_j.
$$
We denote by $[{\bf w}/{\bf v}]$ the absolute value of the
determinant of $({\bf w}/{\bf v})$. Consider a finite chain complex $\CC = (C_\bullet,\pl_\bullet)$ of finite dimensional
vector spaces
\begin{equation*}
	\xymatrix{ \mathfrak{C}: 0\ar[r] & C_m\ar[r]^{\pl_m}  & C_{m-1}
		\ar[r]^{\pl_{m-1}}
		& \cdots \ar[r]^{\pl_2} &
		C_{1} \ar[r]^{\pl_1} & C_0 \ar[r] &0}.
\end{equation*}
Define, for any $q=0,\ldots,m$,
$$
Z_q:= \ker \pl_q, \quad B_q := \Im \pl_{q+1}, \quad H_q(\CC) :=
Z_q/ B_q.
$$
Choose a preferred basis ${\bf c}_q$ for each $C_q$ and
a basis ${\bf h}_q$ for each $H_q(\CC)$. Choose representatives
$ {\bf \tilde h}_q$ in $C_q$ of the homology classes ${\bf h}_q$.
Choose a linearly independent set ${\bf b}_q \subset C_q$,
such that $\pl_q({\bf b}_{q})$ is a basis of $B_{q-1}$. Then
$(\pl_{q+1}({\bf b}_{q+1}) \ {\bf \tilde h}_q \ {\bf b}_{q})$ is a basis of $C_q$ and it makes
sense to consider $[\pl_{q+1}({\bf b}_{q+1})\; {\bf \tilde h}_q\;{\bf b}_{q}/{\bf c}_q]$.
We will denote this positive number by
$$
[\pl_{q+1}({\bf b}_{q+1})\; {\bf h}_q\;{\bf b}_{q}/{\bf c}_q]:=
[\pl_{q+1}({\bf b}_{q+1})\; {\bf \tilde h}_q\;{\bf b}_{q}/{\bf c}_q],
$$
as it depends only on ${\bf b}_q$, ${\bf b}_{q+1}$ and ${\bf h}_q$,
and is independent of the choice ${\bf \tilde h}_q$.

\begin{definition}\label{Definition-scalarReidemeisterTorsion} The  Reidemeister
	torsion of a chain complex $\CC$ with fixed choice of homology basis
	${\bf h}=({\bf h}_q)_q$ and ${\bf c}= ({\bf c}_q)_q$ is the positive real number
	\begin{equation}\label{Eq-scalarreidform}
		\tau(\CC;{\bf h};{\bf c}): = \prod_{q=0}^m [\pl_{q+1}({\bf b}_{q+1}) \;
		{\bf h}_q\;{\bf b}_{q}/{\bf c}_q]^{(-1)^q}\in \R_+.
	\end{equation}
\end{definition}

This is well-defined, since the number $\tau(\CC;{\bf h};{\bf c})$ does not depend on the choice of  ${\bf b}_q$ and ${\bf b}_{q+1}$.
It does depend on the choice of ${\bf h}$ and ${\bf c}$ though. We make the construction independent of ${\bf h}$
in the same manner as in Definition \ref{tors-norm}. We introduce the following notation
\begin{align*}
&\det\;{\bf h}_q\in \det\;H_q(\CC), \quad \det\;{\bf h} := \bigotimes_{q=0}^m \bl\det\; {\bf h}_q\br^{(-1)^q} \in \det\;H_\bullet(\CC).
\end{align*}
Choosing $\det\;{\bf h}$ as a unit element defines a norm $\| \cdot \|_{{\bf h}}$ on $\det\; H_\bullet(\CC)$.
We now define the abstract Reidemeister torsion of a complex in the same way as in Definition \ref{tors-norm}.

\begin{definition}
	The Reidemeister metric of $\CC$ is a norm on $\det\;H_\bullet(\CC)$ given by
	\begin{equation}\label{abstract-R}
		\|\cdot \|_{\det\;H_\bullet(\CC)}^R:=
		\tau(\CC;{\bf h};{\bf c}) \| \cdot \|_{\det \, {\bf h}}.
	\end{equation}
\end{definition}

The Reidemeister metric, viewed as a norm on $\det\;H_\bullet(\CC)$
is independent of the choice of ${\bf h}$. Now we apply this definition to $\overline{M}$.

\begin{definition}\label{Def-IRtorsion}
Consider $\overline{M}$ as a stratified  space with singular stratum $B$ and a tubular neighborhood $\overline{\cU}$
being a fibre bundle of spheres over $B$. Then \eqref{abstract-R} defines Reidemeister torsions on $\overline{M}$, that is
viewed either as a stratified space or as a smooth manifold, as follows
\begin{enumerate}
\item Consider a simplicial intersection chain complex $\CC:=(IC^{\mathfrak{p}}_\bullet(\overline{M}),
\partial_\bullet)$ on $\overline{M}$ as defined by \cite{GM1}, where $\mathfrak{p}$ stands for lower or upper middle
perversity.
Let ${\bf c}$ be a basis of chains as in Remark \ref{cell-basis} below. Then
\eqref{Eq-scalarreidform} defines  intersection
Reidemeister torsion $I^{\mathfrak{p}}\tau(\overline{M}, {\bf h})$, which is invariant under  subdivision by Dar \cite{Dar1}.
Moreover, \eqref{abstract-R} defines the intersection Reidemeister metric $\| \cdot \|^{IR}_{\overline{M},\mathfrak{p}}$.

\item Consider a simplicial chain complex $\CC:=(C_\bullet (\overline{M}),
\partial_\bullet)$.
Let ${\bf c}$ be a basis of simplexes. Then \eqref{Eq-scalarreidform} defines
Reidemeister torsion $\tau(\overline{M}, {\bf h})$, independent of the choice
of  subdivision, see e.g. Milnor \cite{Mil}.
Moreover, \eqref{abstract-R} defines the (classical) Reidemeister metric $\| \cdot \|^{R}_{\overline{M}}$.
\end{enumerate}
\end{definition}

\begin{remark}\label{cell-basis}
Since the  simplicial intersection chain complex $(IC^{\mathfrak{p}}_\bullet(\overline{M}),
\partial_\bullet)$ is a
subcomplex of the  chain complex $(C_\bullet (\overline{M}),
\partial_\bullet)$ we can not always
fix the preferred basis as in the smooth case. Therefore, we choose
as preferred basis each simplex that satisfies the intersection condition, if they
exist, and to complete this set of generators to a basis, we choose integral chains, in the sense of
the first named author and Spreafico \cite[Lemma 7.3]{HS2020}, \ie simplicial chains with coefficients integer numbers.
\end{remark}

	\begin{remark}
We may use a cellular chain complex in Definition \ref{Def-IRtorsion}. To do this, we need to extend the definition of the cellular intersection chain complex from \cite{HS2020} to a space with a wedge singularity. This extension should be feasible because of the rich structure of the tubular neighborhood, which reduces the problem to the cellular intersection chain complex of the fiber, which is a cone.
	\end{remark}

Since $\overline{M}$ is smooth, it is a homology manifold and the Poincaré duality holds. Thus the intersection homology
$IH^{\mathfrak{p}}_*(\overline{M}):= H_*(IC^{\mathfrak{p}}_\bullet(\overline{M}), \partial_\bullet)$ from Goresky and MacPherson \cite{GM1} coincides with the usual homology theory
$H_*(\overline{M}):=H_*(C_\bullet (\overline{M}), \partial_\bullet)$, and the de Rham integration map together with duality defines an isomorphism
\begin{align}\label{R}
R: \det \, H^*_{(2)}(M,g_\varepsilon) \equiv \det \, H^*_{(2)}(M,g)  \overset{\sim}{\to}  \det \, IH^{\mathfrak{p}}_*(\overline{M}) \equiv \det \, H_*(\overline{M}).
\end{align}
Nevertheless, the intersection
chain complex $(IC^{\mathfrak{p}}_\bullet(M), \partial_\bullet)$ needs not to be equal to the
chain complex $(C_\bullet (\overline{M}), \partial_\bullet)$.
Therefore a priori the corresponding Reidemeister torsions need not be the same and their equality is a theorem, which we now prove.

\begin{remark}
In the next theorem we present a direct proof of equality for both torsions. An alternative proof
uses a strong result from \cite{Dar1}. Namely, if $\overline{M}$ is smooth we can consider two different stratifications.
One can either consider $\Sigma = \emptyset$, in which case the intersection chain complex is the usual complex
$(C_\bullet (\overline{M}), \partial_\bullet)$. If we consider $\Sigma = B$ then the resulting chain complex is $(IC^{\mathfrak{p}}_\bullet(M), \partial_\bullet)$. By \cite[Theorem 2.9]{Dar1} the intersection Reidemeister torsion is independent of the stratification so the torsions of these two complexes coincide.
\end{remark}

\begin{theorem}\label{main-IR}
Consider a smooth manifold $\overline{M}$ with an embedded submanifold $B$ as above. Then
the intersection and the classical Reidemeister metric coincide
$$
\| \cdot \|^{IR}_{\overline{M}, \mathfrak{p}} = \| \cdot \|^{R}_{\overline{M}}.
$$
\end{theorem}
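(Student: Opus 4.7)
The plan is to compare the two based chain complexes via a short exact sequence and exploit the multiplicativity of Reidemeister torsion. First, since both torsions are invariant under cellular subdivision (Dar \cite{Dar1} for the intersection torsion, Milnor \cite{Mil} for the classical one), we may fix any convenient cellular decomposition of $\overline{M}$. As $B$ is a smoothly embedded submanifold of the smooth manifold $\overline{M}$, we choose a regular cellular decomposition in which $B$ is a subcomplex and every cell of $\overline{M}$ is either contained in $B$ or meets $B$ transversally.

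With such a decomposition, $IC^{\mathfrak{p}}_\bullet(\overline{M})$ is a subcomplex of $C_\bullet(\overline{M})$, and by the discussion preceding the theorem (Poincar\'e duality for smooth manifolds) the inclusion is a quasi-isomorphism identifying $IH^{\mathfrak{p}}_*(\overline{M})$ with $H_*(\overline{M})$. I would then consider the short exact sequence of based chain complexes
$$
0 \to IC^{\mathfrak{p}}_\bullet(\overline{M}) \to C_\bullet(\overline{M}) \to Q_\bullet \to 0,
$$
whose quotient $Q_\bullet$ is therefore acyclic. Equip $C_\bullet(\overline{M})$ with the cell basis ${\bf c}$, equip $IC^{\mathfrak{p}}_\bullet(\overline{M})$ with the integral-chain basis ${\bf c}^I$ of Remark \ref{cell-basis}, and give $Q_\bullet$ the induced quotient basis. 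Applying Milnor's multiplicativity of Reidemeister torsion in short exact sequences \cite{Mil}, and using the canonical identification of homology bases ${\bf h}$, one obtains
$$
\tau(C_\bullet; {\bf h}; {\bf c}) = \tau(IC^{\mathfrak{p}}_\bullet; {\bf h}; {\bf c}^I) \cdot \tau(Q_\bullet),
$$
which translates into the identity of norms $\|\cdot\|^{R}_{\overline{M}} = \|\cdot\|^{IR}_{\overline{M},\mathfrak{p}} \cdot \tau(Q_\bullet)$ on $\det H_*(\overline{M})$.

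It remains to show $\tau(Q_\bullet) = 1$. The intersection basis ${\bf c}^I$ is built out of integral chains in the sense of \cite[Lemma 7.3]{HS2020}, i.e., $\Z$-linear combinations of cells forming a $\Z$-module basis of $IC^{\mathfrak{p}}_\bullet(\overline{M})$. The remaining cells of $C_\bullet(\overline{M})$, namely those not satisfying the perversity condition (essentially the cells lying in $B$), descend to an integer basis of $Q_\bullet$. The resulting change of basis on $C_\bullet$ from ${\bf c}$ to ${\bf c}^I$ together with a lift of the $Q_\bullet$-basis is unimodular over $\Z$, so every factor $[\partial_{q+1}({\bf b}_{q+1})\,{\bf b}_q/{\bf c}_q]$ entering the torsion formula \eqref{Eq-scalarreidform} for $Q_\bullet$ has absolute value $1$. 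Hence $\tau(Q_\bullet) = 1$ and the theorem follows.

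The main obstacle is precisely this last step: verifying that the integral-chain basis of the intersection complex together with an appropriate lift of the quotient basis really yields a $\Z$-unimodular change from the cell basis. This hinges on the precise construction in \cite[Lemma 7.3]{HS2020} and on a careful cell-by-cell analysis of the perversity condition with respect to the chosen decomposition. If necessary, one can further subdivide $\overline{M}$ (keeping $B$ as a subcomplex) and invoke the subdivision invariance of both torsions to put the integer structure into a standard form.
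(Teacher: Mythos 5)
Your route --- comparing the two based complexes through the short exact sequence $0\to IC^{\mathfrak{p}}_\bullet(\overline{M})\to C_\bullet(\overline{M})\to Q_\bullet\to 0$ and invoking Milnor's multiplicativity --- is genuinely different from the proof in the paper, and its skeleton is reasonable: $Q_\bullet$ is acyclic over $\R$ because the inclusion induces the isomorphism $IH^{\mathfrak{p}}_*(\overline{M})\cong H_*(\overline{M})$, and with the homology bases identified under this isomorphism the formula $\tau(C_\bullet;{\bf h};{\bf c})=\tau(IC^{\mathfrak{p}}_\bullet;{\bf h};{\bf c}^I)\cdot\tau(Q_\bullet)$ holds once the bases are compatible. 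The gap is in the final step. Unimodularity of the change of basis over $\Z$ only guarantees \emph{compatibility} of the bases (so that no correction factor enters the multiplicativity formula); it does \emph{not} force $\tau(Q_\bullet)=1$. The torsion of $Q_\bullet$ depends on its boundary maps, not merely on its basis: for a complex of finitely generated free $\Z$-modules with integral preferred bases that is acyclic over $\R$, the scalar torsion equals (up to the sign convention in the exponent) the alternating product $\prod_q \lvert H_q(Q_\bullet;\Z)\rvert^{\pm 1}$ of the orders of the integral homology groups. The complex $0\to\Z\xrightarrow{\,2\,}\Z\to 0$ carries a perfectly unimodular integral basis and has torsion $2$. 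So to conclude $\tau(Q_\bullet)=1$ you must prove that $Q_\bullet$ is acyclic \emph{over $\Z$}, i.e.\ that the inclusion $IC^{\mathfrak{p}}_\bullet(\overline{M};\Z)\hookrightarrow C_\bullet(\overline{M};\Z)$ is an integral quasi-isomorphism for the chosen cellulation. This is where the real content lies and it is not addressed; by contrast, the obstacle you flag (unimodularity of the basis change) is comparatively harmless, since allowability is a support condition and the boundary condition realizes $IC^{\mathfrak{p}}_q(\Z)$ as the kernel of a map into a free group, so it is automatically a direct summand of $C_q(\Z)$.

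The paper sidesteps this issue by localizing: both torsion norms satisfy the Mayer--Vietoris gluing formula of \cite[Theorem 3.2]{Mil}, which reduces the claim to the tubular neighborhood $\overline{\cU}$ of $B$, then via the K\"unneth/product formula for torsions to the local model $W\times C(\mathbb{S}^n)$, where both the classical and the intersection Reidemeister torsions are computed explicitly and shown to equal $[{\bf h}_0/{\bf n}_0]$ (using \cite[Proposition 2]{HMS} and \cite[Theorem 7.7]{HS2020}). If you wish to retain your global argument, the natural way to establish the required $\Z$-acyclicity of $Q_\bullet$ is itself a local-to-global reduction to cones over spheres --- at which point you have essentially reconstructed the paper's proof. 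Either supply that integral statement with a reference or argument, or switch to the gluing strategy.
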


\begin{proof}
Both Reidemeister torsions satisfy a gluing formula, as in \cite[Theorem 3.2]{Mil}):
Consider $\overline{M} = M_1 \cup M_2$ a covering by two subsets and the gluing
sequence
\begin{equation}
\xymatrix{ 0 \ar@{^{(}->}[r] & M_1 \cap M_2 \ar[r] & M_1\sqcup M_2
\ar@{->>}[r] & \overline{M} \ar[r] &0}.
\end{equation}
The gluing formula for combinatorial torsions yields
\begin{equation}\label{scalar-gluing}
\begin{aligned}
&I^{\mathfrak{p}}\tau (\overline{M}; {\bf h}_M) = \frac{I^{\mathfrak{p}}\tau
(M_1; {\bf h}_{M_1})\cdot I^{\mathfrak{p}}\tau (M_2; {\bf h}_{M_2})}{\tau
(M_1 \cap M_2; {\bf h}_{M_1\cap M_2})\cdot \tau (\mathfrak{H})}, \\
&\tau (\overline{M}; {\bf h}_M) = \frac{\tau
(M_1; {\bf h}_{M_1})\cdot \tau (M_2; {\bf h}_{M_2})}{\tau
(M_1 \cap M_2; {\bf h}_{M_1\cap M_2})\cdot \tau (\mathfrak{H})}, \\
\end{aligned}
\end{equation}
where $\tau (\mathfrak{H})$ is the torsion of the
following long exact sequence
\begin{equation*}
\xymatrix@C=0.3cm{\cdot \cdot \ar[r] & H_q(M_1 \cap M_2) \ar[r] & H_q (M_1)
\oplus H_q(M_2) \ar[r] &  H_q(\overline{M})\ar[r] &
H_{q-1}(M_1\cap M_2)\ar[r] &\cdot \cdot}.
\end{equation*}
This can be reformulated as a gluing formula for Reidemeister torsion metrics as follows.
This long exact sequence defines an isomorphism between determinant lines in homology
\begin{equation}
\Phi: \det \, H_*(\overline{M}) \to \det \, H_*(M_1) \otimes \det \, H_*(M_2) \otimes \det \, H_*(M_1 \cap M_2)^{-1}.
\end{equation}
A tedious, but straightforward combinatorial computation shows
\begin{equation}\label{Phi}
\Phi (\det \, {\bf h}_M) = \tau (\mathfrak{H}) \cdot \det \, {\bf h}_{M_1}
\otimes \det \, {\bf h}_{M_1} \otimes \bigl( \det \, {\bf h}_{M_1\cap M_2} \bigr)^{-1}.
\end{equation}
From here we conclude from \eqref{scalar-gluing}, \eqref{abstract-R} and \eqref{Phi}
\begin{equation}\label{scalar-gluing-1}
\begin{aligned}
&\| \Phi^{-1} \bigl( \alpha \otimes \beta \otimes \gamma^{-1}\bigr) \|^{IR}_{\overline{M}, \mathfrak{p}}
= \frac{\| \alpha \|^{IR}_{M_1, \mathfrak{p}} \cdot \| \beta \|^{IR}_{M_2, \mathfrak{p}}}{\| \gamma \|^{IR}_{M_1\cap M_2, \mathfrak{p}} }, \\
&\| \Phi^{-1} \bigl( \alpha \otimes \beta \otimes \gamma^{-1}\bigr) \|^{R}_{\overline{M}}
= \frac{\| \alpha \|^{R}_{M_1} \cdot \| \beta \|^{R}_{M_2}}{\| \gamma \|^{R}_{M_1\cap M_2} },
\end{aligned}
\end{equation}
for any $\alpha \in  \det \, H_*(M_1)$, $\beta \in \det \, H_*(M_2)$ and $\gamma \in \det \, H_*(M_1 \cap M_2)$.
Thus, we can reduce our proof from the whole $\overline{M}$ to the tubular neighborhood $\overline{\cU}$ of the
singular stratum $B$. Cutting $\overline{\cU}$ into trivializable local bundles of cones
$C (\mathbb{S}^n)$, we can reduce the claim
to $W \times C (\mathbb{S}^n)$, where $W$ is any smooth manifold. The statement now
follows from Lemma \ref{product-lemma} below.
\end{proof}

\begin{lemma}\label{product-lemma}
Let $W$ be a smooth Riemannian manifold. Then
$$
\| \cdot \|^{IR}_{W \times C (\mathbb{S}^n),\mathfrak{p}} = \| \cdot \|^{R}_{W \times C
(\mathbb{S}^n)}.
$$
\end{lemma}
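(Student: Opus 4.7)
The plan is to exploit that $C(\mathbb{S}^n)$ is topologically the closed $(n+1)$-disk, and hence contractible and collapsible, and to combine this with a Künneth-type product formula for both torsion norms so that the comparison reduces to the cone factor alone.

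First I would fix compatible cellular structures: take any CW decomposition of $W$, lift the minimal CW structure of $\mathbb{S}^n$ to $\mathbb{S}^n \times (0,1]$, and cone it off with a single $0$-cell $v$ at the tip. Together these produce a product CW structure on $W \times C(\mathbb{S}^n)$ compatible with the stratification by the singular stratum $W \times \{v\}$. Next, I would establish Künneth-type product formulas for the two torsion norms,
\begin{equation*}
\| \cdot \|^{R}_{W \times C(\mathbb{S}^n)} = \| \cdot \|^{R}_{W} \boxtimes \| \cdot \|^{R}_{C(\mathbb{S}^n)},
\qquad
\| \cdot \|^{IR}_{W \times C(\mathbb{S}^n), \mathfrak{p}} = \| \cdot \|^{R}_{W} \boxtimes \| \cdot \|^{IR}_{C(\mathbb{S}^n), \mathfrak{p}},
\end{equation*}
where $\boxtimes$ denotes the product on determinant lines induced by the algebraic Künneth isomorphism in homology. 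The classical version is Milnor's formula; on the intersection side the same combinatorial identity holds because the cross product of a smooth cell in $W$ with a $\mathfrak{p}$-allowable chain in $C(\mathbb{S}^n)$ remains $\mathfrak{p}$-allowable in the product, the singular stratum being of the same codimension as in the cone factor. Note that the intersection torsion on the $W$ factor degenerates to the classical one, since $W$ is smooth and every cell is trivially allowable.

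It then suffices to show the identity on the single cone factor, $\| \cdot \|^{R}_{C(\mathbb{S}^n)} = \| \cdot \|^{IR}_{C(\mathbb{S}^n), \mathfrak{p}}$. For $n \geq 1$ and lower or upper middle perversity, the cone formula for intersection homology truncates all positive degrees of $H_{*}(\mathbb{S}^n)$, so $IH_{*}^{\mathfrak{p}}(C(\mathbb{S}^n)) \cong H_{*}(\mathrm{point})$, matching the ordinary homology of the contractible disk. The classical torsion of a collapsible CW structure on $C(\mathbb{S}^n) \cong D^{n+1}$ is trivial by an elementary direct computation, and on the intersection side the preferred basis of Remark \ref{cell-basis} (allowable cells completed by integral chains) produces an explicit small complex whose torsion I would compute by hand to be $1$ on the one-dimensional line $\det \, IH^{\mathfrak{p}}_{0}$.

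The main obstacle I expect is verifying the Künneth product formula in the intersection setting, because $IC^{\mathfrak{p}}_{\bullet}$ is only a subcomplex of $C_{\bullet}$ and the preferred basis of Remark \ref{cell-basis} is not a pure tensor of bases on the two factors. I would handle this by working locally on trivialisable pieces, exploiting the rigidity of middle perversity (crossing with a smooth cell preserves $\mathfrak{p}$-allowability in either direction) and absorbing any combinatorial discrepancy in the basis change into the long exact sequence torsion $\tau(\mathfrak{H})$ already accounted for in the gluing identity \eqref{scalar-gluing-1}.
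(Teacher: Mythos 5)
Your proposal follows essentially the same route as the paper: a Künneth product formula for both torsion norms (the paper cites King's Künneth theorem for intersection chain complexes where you sketch the allowability argument) reduces the claim to the single cone factor $C(\mathbb{S}^n)$, where both torsions are computed explicitly and agree (the paper cites \cite[Proposition 2]{HMS} and \cite[Theorem 7.7]{HS2020}, showing both equal $[{\bf h}_0/{\bf n}_0]$, rather than normalizing to $1$, but as norms on the determinant line this is the same conclusion). The argument is correct and matches the paper's proof in structure and substance.
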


\begin{proof}
The K\"unneth equivalence of chain complexes, see King \cite[Theorem 4]{Kin} for the analogous statement on intersection complexes,
implies product formulas for combinatorial torsions
\begin{equation*}
\begin{aligned}
&\tau(W\times C (\mathbb{S}^n); {\bf h}_W\otimes {\bf h}_{C (\mathbb{S}^n)}) =  \bl \tau
(W;{\bf h}_W)\br^{\chi(C (\mathbb{S}^n))}\cdot \bl \tau(C (\mathbb{S}^n);{\bf h}_{C (\mathbb{S}^n)})\br^{\chi(W)}, \\
&I^{\mathfrak{p}}\tau(W\times C (\mathbb{S}^n); {\bf h}_W\otimes {\bf h}_{C (\mathbb{S}^n)}) =  \bl \tau
(W;{\bf h}_W)\br^{\chi(C (\mathbb{S}^n))}\cdot \bl I^{\mathfrak{p}}\tau(C
(\mathbb{S}^n);{\bf h}_{C (\mathbb{S}^n)})\br^{\chi(W)}.
\end{aligned}
\end{equation*}
Thus, the statement follows once we prove the following equality
\begin{align}\label{cone-torsion}
\tau(C (\mathbb{S}^n);{\bf h}_{C (\mathbb{S}^n)}) = I^{\mathfrak{p}}\tau(C (\mathbb{S}^n);{\bf h}_{C (\mathbb{S}^n)}).
\end{align}
The classical Reidemeister torsion on the left hand side can be computed similar to
de Melo, the first named author and Spreafico \cite[Proposition 2]{HMS}, so that
\[
\tau(C (\mathbb{S}^n);{\bf h}_{C (\mathbb{S}^n)})= [{\bf h}_0/{\bf n}_0],
\]
where ${\bf n}_0$ is a generator of $H_0(\mathbb{S}^n;\Z) = \Z$ over $\Z$. Now, the same result holds for
the intersection Reidemeister torsion with perversity $\mathfrak{p}$ by
\cite[Theorem 7.7]{HS2020}. Hence \eqref{cone-torsion} holds and the statement
follows.
\end{proof}

\section{Cheeger-M\"uller theorem for wedges along a submanifold}

We can now prove our main result, Theorem \ref{main} on equality
of analytic and intersection Reidemeister torsions for manifolds with wedges along
an embedded submanifold. We phrase the result in terms of norms on determinant
lines in (co-)homology.

\begin{theorem}
	Let $(M,g_\epsilon)$ be a family of wedge space with singularity along an embedded
	submanifold $B$. If $\dim M$ and $\dim B$ are both odd, then the Cheeger-M\"uller theorem holds, \ie
	for $R$ induced by the de Rham integration map and duality as in \eqref{R}, and $\mathfrak{p}$ denoting
	lower or upper middle perversity, we have
	\begin{equation}
		\|\cdot\|^{RS}_{(M;g_{\epsilon})} = \| \, R(\cdot) \, \|^{IR}_{\overline{M},\mathfrak{p}}.
	\end{equation}
\end{theorem}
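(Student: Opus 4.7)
The plan is to chain three equalities, following the outline given in the introduction:
$$
\|\cdot\|^{RS}_{(M,g_\varepsilon)} \;\overset{\text{(I)}}{=}\; \|\cdot\|^{RS}_{(M,g_0)} \;\overset{\text{(II)}}{=}\; \|R(\cdot)\|^{R}_{\overline{M}} \;\overset{\text{(III)}}{=}\; \|R(\cdot)\|^{IR}_{\overline{M},\mathfrak{p}}.
$$
Step (I) is exactly Theorem \ref{main-RS}, already established by the metric deformation invariance drawn from \cite[Theorem 1.4]{MaVe1}. Step (III) is exactly Theorem \ref{main-IR}. Thus no additional work is required at the two outer equalities, and the task reduces entirely to justifying step (II).

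The heart of the argument is the observation that the specialization $\varepsilon = 0$ collapses the wedge structure: the metric $g_0$ extends smoothly across $B$ to a genuine Riemannian metric on the closed smooth manifold $\overline{M}$. Since $\dim M$ and $\dim B$ are both odd, the codimension $\dim M - \dim B = n+1$ is even and in particular at least two. Standard capacity arguments for smooth embedded submanifolds of codimension $\geq 2$ then give that $C^\infty_c(M)$ is dense in the relevant Sobolev spaces on $\overline{M}$, so that the $L^2$-cohomology $H^*_{(2)}(M, g_0)$ coincides with the ordinary de Rham cohomology $H^*(\overline{M})$ with matching $L^2$-inner products, and the Friedrichs extension $\Delta^{\mathscr{F}}$ on $(M, g_0)$ coincides with the classical Hodge Laplacian on $(\overline{M}, g_0)$. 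In particular the heat traces and hence the zeta-regularized determinants of \eqref{zeta-def} agree in every degree, and I conclude $\|\cdot\|^{RS}_{(M,g_0)} = \|\cdot\|^{RS}_{(\overline{M},g_0)}$. I then invoke the classical Cheeger-M\"uller theorem of \cite{Che1, Mul} on the smooth closed manifold $(\overline{M}, g_0)$ to conclude $\|\cdot\|^{RS}_{(\overline{M},g_0)} = \|R(\cdot)\|^{R}_{\overline{M}}$, where at $\varepsilon = 0$ the map $R$ from \eqref{R} reduces to the standard de Rham / Poincar\'e duality isomorphism.

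The main obstacle I anticipate is the bookkeeping of the integration map $R$ consistently across all three steps. Concretely, one must verify that the normalization $h_M(\varepsilon)\in \det H_*(\overline{M})$, chosen so that $R^{-1}(h_M(\varepsilon))$ is an $L^2$-unit vector in $\det H^*_{(2)}(M, g_\varepsilon)$, is compatible under the $\varepsilon$-deformation used in step (I) with the canonical identification valid at $\varepsilon = 0$. Because Theorems \ref{main-RS} and \ref{main-IR} are both phrased as equalities of norms on determinant lines (via Definition \ref{tors-norm} and \eqref{abstract-R}) rather than merely of scalar torsions, this compatibility is essentially built into the formalism, but the explicit matching amounts to tracing the $L^2$-harmonic representatives through the deformation and through the gluing argument underlying Theorem \ref{main-IR}. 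Once this verification is in place, the three equalities concatenate to the desired identity.
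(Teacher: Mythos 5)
Your proposal is correct and follows essentially the same route as the paper: the chain $\|\cdot\|^{RS}_{(M,g_\varepsilon)} = \|\cdot\|^{RS}_{(M,g_0)} = \|R(\cdot)\|^{R}_{\overline{M}} = \|R(\cdot)\|^{IR}_{\overline{M},\mathfrak{p}}$ via Theorem \ref{main-RS}, the classical Cheeger--M\"uller theorem, and Theorem \ref{main-IR}. Your added justification that the $L^2$ Hodge theory of $(M,g_0)$ agrees with the smooth Hodge theory of $(\overline{M},g_0)$ (codimension $\geq 2$, density of compactly supported forms) is a correct elaboration of a step the paper leaves implicit.
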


\begin{proof}
	The claim is a consequence of Theorems \ref{main-RS}
	and \ref{main-IR}, as well as the Cheeger-M\"uller theorem on closed smooth manifolds. Indeed,
	we have the following sequence of equations
\begin{equation*}
\|\cdot \|_{(M,g_\varepsilon)}^{RS} \stackrel{\textup{Theorem} \ \ref{main-RS}}{=} \|\cdot \|_{(M,g_0)}^{RS}
= \| \, R(\cdot) \, \|^{R}_{\overline{M}} \stackrel{\textup{Theorem} \ \ref{main-IR}}{=} \| \, R(\cdot) \, \|^{IR}_{\overline{M},\mathfrak{p}},
\end{equation*}
where the middle equality is simply the Cheeger-M\"uller theorem of Che\-eger \cite{Che1} and M\"uller \cite{Mul}.
\end{proof}

\section*{Declarations}

\subsection*{Funding}The first author was partially supported by FAPESP
2021/09534-4. The authors was partially supported by CAPES/DAAD:
8881.700909/ 2022-01.

\subsection*{Conflict of Interests} The authors declare no conflict of interest.

\bibliography{local}
\bibliographystyle{amsalpha-lmp}

\end{document}